
\documentclass[english,12pt,a4paper]{amsart}

\usepackage{amssymb}
\usepackage{hyperref}

\theoremstyle{plain}
\newtheorem{Theorem}{Theorem}
\newtheorem{Lemma}[Theorem]{Lemma}
\newtheorem{Corollary}[Theorem]{Corollary}
\newtheorem{Example}[Theorem]{Example}

\newcommand{\ddim}{{\rm dim}}
\newcommand{\trdeg}{{\rm tr.deg}}
\newcommand{\ch}{{\rm char}}
\newcommand{\setmin}{\smallsetminus}

\begin{document}

\bibliographystyle{alpha}

\title[Subfields of ample fields I]{Subfields of ample fields I. \\ Rational maps and definability}
\author{Arno Fehm}

\begin{abstract}
Pop proved that a smooth curve $C$ over an ample field $K$
with $C(K)\neq\emptyset$ has $|K|$ many rational points.
We strengthen this result 
by showing that there are $|K|$ many rational points
that do not lie in a given proper subfield,
even after applying a rational map.
As a consequence we
gain insight into the structure of existentially definable subsets
of ample fields. 
In particular, we prove that a perfect ample field
has no existentially definable proper infinite subfields.
\end{abstract}

\maketitle

\section*{Introduction}

Recall that a field $K$ is called {\em ample} (or {\em large} after \cite{Pop}) 
if every smooth curve $C$ defined over $K$ with a
$K$-rational point has infinitely many of them.
The class of ample fields subsumes several seemingly unrelated
classes of fields -- separably closed fields and pseudo
algebraically closed fields, real closed fields, and Henselian
valued fields.

By introducing ample fields, Pop was able to reprove and generalise a couple of deep
results on a certain important Galois theoretic conjecture of D\`ebes and Deschamps 
(a vast generalization of the classical inverse Galois problem, see \cite[\S2.1.2]{DD}).
Furthermore, all fields for which this conjecture has been proven so far turned out to be ample.
Therefore, the class of ample fields now plays a decisive role in the study of general Galois theory. 
Moreover, in recent years ample fields drew attention in several other branches
of mathematics -- for example in the study of rationally connected varieties and torsors, 
see \cite{Kollar}, \cite{MoretBailly},
in the study of Abelian varieties, see \cite{Kobayashi}, \cite{LozanoRobledo}, \cite{FehmPetersen},
and in the study of definability in fields, see \cite{Koenigsmann2}, \cite{PoonenPopPublished}, \cite{JunkerKoenigsmann}.
We refer the reader to the introduction of \cite{PopHenselian} for
a more extensive survey of ample fields.

Harbater and Stevenson called a field $K$ {\em very large}
if every smooth curve $C$ defined over $K$ with a
$K$-rational point has $|K|$ many such points. 
They proved that the absolute Galois group of a function field of one variable
over a very large field $K$ is a so-called quasi-free group,
and they asked if the same holds for every ample field $K$ (see \cite{HarbaterStevenson}).
Pop gave a positive answer to this question,
by showing that every ample field is actually very large, cf.~\cite[Proposition 3.3]{Harbater}.

In this work, we considerably strengthen this result by proving
that in the situation above, $C$ has $|K|$ many rational points {\em with respect to any proper subfield} of $K$. 
That is:

\begin{Theorem}\label{main}
Let $K$ be an ample field,
$C$ a curve defined over $K$ with a simple $K$-rational point and
$\varphi:C\rightarrow C^\prime$ a separable dominant $K$-rational map to an affine
curve $C^\prime\subseteq\mathbb{A}^n$ defined over $K$.
Then for every proper subfield $K_0$ of $K$,
$$
 |\varphi(C(K))\setmin\mathbb{A}^n(K_0)|=|K|.
$$
\end{Theorem}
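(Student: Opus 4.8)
The plan is to dispose of the ``large cardinality'' situation by a direct count, reduce everything else to a single separable function, and concentrate all the real work in the case where $K_0$ has the same cardinality as $K$. First I would replace $C$ by its normalisation $\tilde C$: a simple $K$-point is a smooth point, so it lifts to a $K$-rational point of the smooth (projective) model, on which $\varphi$ remains a separable dominant $K$-rational map to $C'$. By Pop's theorem that ample fields are very large, $|\tilde C(K)|=|K|$, and since $\varphi$ is generically finite its fibres are finite, whence $|\varphi(C(K))|=|K|$. If $|K_0|<|K|$ then $|\mathbb{A}^n(K_0)|=|K_0|<|K|=|\varphi(C(K))|$ and the statement is immediate. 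So from now on I assume $|K_0|=|K|$; in particular $K_0$ is infinite.

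Next I would reduce to $n=1$. As $C'$ is a curve, some coordinate $x_j|_{C'}$ is non-constant, and because $K_0$ is infinite its points are Zariski dense in $\mathbb{A}^n_K$; hence I can choose $\lambda_1,\dots,\lambda_n\in K_0$ so that $g:=\sum_j\lambda_j x_j|_{C'}$ is a separating element of $K(C')/K$. Then $f:=g\circ\varphi\colon\tilde C\to\mathbb{A}^1$ is separable (separable over $K(C')$, which is separable over $K(g)$) and non-constant. Since all $\lambda_j$ lie in $K_0$, $\varphi(P)\in\mathbb{A}^n(K_0)$ forces $f(P)\in K_0$; and $f$ factors through $\varphi$, so each value $v\in f(\tilde C(K))\setmin K_0$ comes from a point $\varphi(P)\notin\mathbb{A}^n(K_0)$ with $g(\varphi(P))=v$, and distinct $v$ give distinct such points. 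Thus $|f(\tilde C(K))\setmin K_0|\le|\varphi(C(K))\setmin\mathbb{A}^n(K_0)|$, and it suffices to show $|f(\tilde C(K))\setmin K_0|=|K|$.

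The crux is this last inequality when $|K_0|=|K|$, and the key observation is that the obvious invariants are blind to $K_0$: one still has $|f(\tilde C(K))|=|K|$, but with $|K_0|=|K|$ this gives no control over $|f(\tilde C(K))\cap K_0|$, and $K_0$ is Zariski dense in $\mathbb{A}^1_K$, so density of rational points is useless. The escape from $K_0$ must come from using ampleness in a field-sensitive way. I would first produce one value $x_0=f(P_1)\in K\setmin K_0$: fixing a $K$-point where $f$ is unramified yields a uniformiser $f-a_0$ and an arc in $\tilde C(K[[u]])$ through it, and ampleness --- in the form that $K$ is existentially closed in $K((u))$, equivalently that a variety with a smooth $K$-point has Zariski dense $K$-points --- is what forces the value set to exceed $K_0$. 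I would then amplify one escaping value to $|K|$ of them. The model is the real closed case: there $f(\tilde C(K))$ contains an interval $I$, any $x_0\in I\setmin K_0$ exists because $I\not\subseteq K_0$, and already $\{x_0+q:q\in K_0,\ x_0+q\in I\}$ has $|K|$ elements, all outside $K_0$; the general version replaces ``$I$ is an interval'' by the statement that the image is locally full around an unramified value, which is exactly what the arc extracts from ampleness.

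The main obstacle is precisely this production-and-amplification step in the equal-cardinality case: converting ampleness into $|K|$ values outside $K_0$. The difficulty is that cardinality and Zariski closure cannot detect the proper subfield $K_0$, so one must distil from the simple $K$-point and the separability of $f$ a ``neighbourhood of values'' that is stable under translation by infinitely many elements of $K_0$. Making this argument uniform across all ample fields --- pseudo algebraically closed, real closed, Henselian, and the rest --- rather than treating each family separately is, I expect, the heart of the matter.
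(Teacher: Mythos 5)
Your two reductions are essentially sound and essentially match the paper's. Disposing of the case $|K_0|<|K|$ by citing Pop's very-large theorem works (the paper instead adjoins a transcendence basis of $K|K_0$ and uses that purely transcendental extensions are never ample, but both routes land in the case $|K_0|=|K|$). The reduction to $n=1$ is also fine, though simpler than you make it: some single coordinate $x_j$ is already a separating element, and its coefficients lie in the prime field, so no generic $K_0$-linear combination is needed. The problem is that everything after that --- the entire content of the theorem --- is missing, as you yourself say in your final paragraph. Moreover, the ideas you sketch for it would not work: producing even \emph{one} value of $f$ outside $K_0$ does not follow from an arc argument, because existential closedness in $K((u))$ only yields Zariski density of $K$-points, and Zariski density is blind to the subfield $K_0$ (the $K_0$-points of $\mathbb{A}^1$ are equally dense); and your amplification step relies on the image $f(C(K))$ being ``locally full'' around an unramified value, a notion that presupposes a topology and simply has no meaning for, say, a PAC field --- nor is the image of a curve under a rational function stable under translation by $K_0$ in general, so the real-closed model does not transfer.

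What actually closes the gap in the paper is Koenigsmann's ratio trick (Lemma~\ref{KoenigsmannLemma}) combined with a linear-algebra covering lemma (Lemma~\ref{LemmaLinAlg}), and it produces $|K_0|$ escaping values \emph{at once} rather than ``one, then amplify''. After moving a smooth point of the plane model $D$ into the fiber over $x=0$, one applies Hensel's lemma to the points with abscissas $ct$ and $t$ over $K((t))$ and uses that $K$ is existentially closed in $K((t))$ to conclude that \emph{every} $c\in K$ is a ratio $\pi P/\pi Q$ of abscissas of points $P,Q\in D(K)$. If $B$ denotes the set of points of $D(K)$ whose abscissa lies outside $K_0$, it follows that $K$ is covered by at most $1+2|B|+|B|^2$ one-dimensional $K_0$-subspaces, namely $K_0$, $K_0\cdot\pi P$, $K_0\cdot(\pi Q)^{-1}$ and $K_0\cdot(\pi P/\pi Q)$ for $P,Q\in B$; since these are proper subspaces of $K$, Lemma~\ref{LemmaLinAlg} forces $1+2|B|+|B|^2\geq|K_0|$, hence $|B|\geq|K_0|=|K|$. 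This covering argument is exactly the ``field-sensitive'' use of ampleness you were looking for: it is the subfield structure of $K_0$, via the subspace covering, that gets contradicted. Finally, because the lemma requires recentering at a smooth point $(x_1,y_1)$ and translation by $x_1$ destroys the condition ``$\notin K_0$'' unless $x_1\in K_0$, the proof of Theorem~\ref{main} adds one more twist: a first application at an arbitrary smooth point only gives $|\pi(D(K))|=|K|$; if the conclusion failed, infinitely many points of $D(K)$ would have abscissa in $K_0$, one of them smooth with $\frac{\partial f}{\partial Y}\neq0$, and recentering there --- a translation by an element of $K_0$, which does preserve $K_0$ --- makes the lemma contradict the assumption. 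Your closing remark about needing something ``stable under translation by infinitely many elements of $K_0$'' gestures at this last step, but the ratio-plus-covering mechanism that actually detects the proper subfield is absent from your proposal.
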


The proof of Theorem \ref{main} is carried out in Section~\ref{sec1}.
It is based on a careful analysis of the trick of Koenigsmann that is used in Pop's proof.

Section~\ref{sec2} gives a model theoretic reinterpretation of Theorem~\ref{main}:
Motivated by what is known about definable subsets of some well-studied 'classical' ample fields,
we apply Theorem~\ref{main} to investigate the structure of existentially definable (i.e.~diophantine) subsets of ample fields.
For example we prove that a perfect ample field has no existentially definable proper infinite subfields,
and we point out connections with the recent work \cite{JunkerKoenigsmann}.

\section*{Notation}

All varieties are geometrically irreducible and geometrically reduced.
For any set $X$, the cardinality of $X$ is denoted by $|X|$.
The algebraic closure of a field $K$ is denoted by $\tilde{K}$.
Formulas are first order formulas in the language of rings.

\section{Varieties over subfields of ample fields}\label{sec1}

We start with a well known lemma from linear algebra:

\begin{Lemma}\label{LemmaLinAlg}
Let $V$ be a vector space over an infinite field $K$.
Suppose $V=\bigcup_{i\in I}W_i$ is a union
of proper linear subspaces $W_i$ of $V$
with
$\sup_{i\in I}\ddim_K(W_i)<\infty$.
Then $|I|\geq|K|$.
\end{Lemma}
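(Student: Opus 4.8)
The plan is to argue by contradiction: assume $|I| < |K|$ and derive that the covering $V = \bigcup_{i\in I} W_i$ cannot exist. The bounded–dimension hypothesis $d := \sup_{i} \dim_K W_i < \infty$ enters only once, to reduce to a finite–dimensional situation. Since each $W_i$ is proper with $\dim_K W_i \le d$, I would choose a subspace $U \le V$ of dimension $\min(\dim_K V, d+1)$, which is at least $2$ under our hypotheses, and intersect the covering with $U$. Each trace $U \cap W_i$ then has dimension at most $d < \dim_K U$, hence is a \emph{proper} subspace of $U$, and these traces still cover $U$. As there are at most $|I| < |K|$ distinct traces, it suffices to prove the finite–dimensional statement that a finite–dimensional vector space over an infinite field $K$ is never the union of fewer than $|K|$ proper subspaces; the traces then contradict this. (The bounded–dimension hypothesis is essential here: a countable–dimensional space over an uncountable field \emph{is} the union of countably many finite–dimensional, hence proper, subspaces.)

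I would prove this finite–dimensional statement by induction on $m = \dim_K U$. The case $m = 1$ is immediate since the only proper subspace is $0$, so assume $m \ge 2$. Fix $0 \ne v \in U$, set $A = \{i : v \in W_i\}$, and pass to the quotient $\bar U = U / Kv$, of dimension $m-1$. For $i \in A$ the image $W_i / Kv$ is a proper subspace of $\bar U$, and the key claim is that these images already cover $\bar U$. Granting this, $\bar U$ is covered by at most $|A| \le |I| < |K|$ proper subspaces, contradicting the induction hypothesis and closing the argument.

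The heart of the proof — and the step I expect to be the main obstacle — is establishing this covering claim, because a naive cardinality count is useless: since $|K|^k = |K|$ for every finite $k$, a single large subspace already contains $|K|$ points, and only in a genuine $\mathbb{P}^1$–situation does counting produce a gap. This is precisely what passing to the quotient by a line manufactures. Concretely, given $\bar u \in \bar U$ with a lift $u \in U$, I would look at the affine line $\{u + tv : t \in K\}$, all of whose $|K|$ points map to $\bar u$. For each index $i \notin A$ one has $v \notin W_i$, so $u + tv \in W_i$ for at most one value of $t$ (two values would force $v \in W_i$); hence the indices outside $A$ account for at most $|I| < |K|$ of the values of $t$. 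Since $|K|$ values are available, some $u + t_0 v$ avoids every $W_i$ with $i \notin A$; as it nevertheless lies in some $W_i$, that index belongs to $A$, and therefore $\bar u \in W_i / Kv$. This proves the covering claim, completes the induction, and hence establishes the lemma.
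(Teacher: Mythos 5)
Your proof is correct, and it follows the paper's skeleton---first reduce to a finite-dimensional subspace of dimension strictly above $d=\sup_{i\in I}\dim_K W_i$, then induct on dimension under the assumption $|I|<|K|$---but the inductive step uses a genuinely different mechanism. The paper goes down by \emph{intersection}: since $|I|<|K|$, there is a codimension-one subspace $W$ of $V$ with $W\neq W_i$ for all $i$ (this uses, implicitly, that a space of dimension at least $2$ over $K$ has at least $|K|$ hyperplanes); then $W=\bigcup_{i\in I}(W_i\cap W)$ is a covering of $W$ by proper subspaces, contradicting the induction hypothesis for $W$. You go down by \emph{quotient}: you factor out a line $Kv$ and show the covering descends to $U/Kv$ via the subspaces $W_i/Kv$ with $v\in W_i$, the work being done by your affine-line pigeonhole (a subspace not containing $v$ meets $u+Kv$ in at most one of its $|K|$ points). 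The two steps are dual to one another, and each rests on a cardinality count placed differently: the paper's count is hidden in the unproved existence of the hyperplane $W$, while yours is explicit and elementary---so your version is longer but more self-contained, and it is essentially the classical argument that a vector space over an infinite field is not a union of few proper subspaces, adapted to the cardinal $|K|$. The only point you wave at is that $\dim_K U\geq 2$ ``under our hypotheses''; it deserves the one-line remark that if $\dim_K V\leq 1$ or $d=0$ then the covering hypothesis is itself contradictory, so the lemma holds vacuously in those cases.
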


\begin{proof}
If $\ddim_K(V)=\infty$, replace $V$ by any subspace $V^\prime\subseteq V$
with $\sup_{i\in I}\ddim_K(W_i)<\ddim_K(V^\prime)<\infty$,
and $W_i$ by $W_i\cap V^\prime$ to assume without loss of generality
that $\ddim_K(V)<\infty$.
Then proceed by induction on $\ddim_K(V)$:

The case $\ddim_K(V)=1$ clearly cannot occur.
If $\ddim_K(V)>1$ and $|I|<|K|$,
then there is a subspace $W$ of $V$ of codimension $1$ such that $W\neq W_i$ for all $i$.
But then $W=\bigcup_{i\in I}(W_i\cap W)$ and each $W_i\cap W$ is a proper subspace of $W$,
contradicting the induction hypothesis.
\end{proof}

The next lemma is a refinement of the method used in \cite{Koenigsmann2}.

\begin{Lemma}\label{KoenigsmannLemma}
Let $K$ be an ample field, $f\in K[X,Y]$ an absolutely irreducible polynomial,
and $y_0\in K$ such that
$f(0,y_0)=0$ and $\frac{\partial f}{\partial Y}(0,y_0)\neq 0$.
Denote by $C$ the affine curve $f(x,y)=0$,
and by $\pi$ the projection $(x,y)\mapsto x$.
Then
for every proper infinite subfield $K_0$ of $K$,
$$
 |\pi (C(K))\setmin K_0|\geq |K_0|.
$$
\end{Lemma}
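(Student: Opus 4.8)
The plan is to first make the smoothness hypothesis usable, and then to reduce the assertion to a purely combinatorial statement about $K_0$-subspaces of $K$ that Lemma~\ref{LemmaLinAlg} is tailor-made to settle.

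First I would use $\frac{\partial f}{\partial Y}(0,y_0)\neq 0$ together with Hensel's lemma over the power series ring $K[[x]]$ to produce a branch $\hat y(x)\in K[[x]]$ with $\hat y(0)=y_0$ and $f(x,\hat y(x))=0$. This exhibits $(0,y_0)$ as a simple point of $C$ at which $x$ is a uniformiser and $\pi$ is separable; in particular $C$ is a curve to which the ample (very large) property of $K$ applies, so $\pi(C(K))$ has cardinality $|K|$ because $\pi$ has finite fibres. If $|K|>|K_0|$ the claim is immediate, since deleting a set of size at most $|K_0|$ from a set of size $|K|$ leaves a set of size $|K|\geq|K_0|$. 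Hence the whole difficulty concentrates in the case $|K_0|=|K|$, where $\pi(C(K))$ and $K_0$ have the same cardinality and a genuine argument is needed to keep $\pi(C(K))$ from being swallowed by $K_0$.

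Next I would reformulate the goal: as $\pi$ has fibres of size at most $d:=\deg_Y f$, it suffices to produce $|K_0|$ many points of $C(K)$ whose $x$-coordinate avoids $K_0$. The crucial observation is that any $P\in C(K)$ with $x(P)\in K_0$ has $y(P)$ a root of the polynomial $f(x(P),Y)\in K_0[Y]$, so $y(P)$ is algebraic over $K_0$ of degree at most $d$; thus the $y$-coordinates of all such exceptional points lie in $\bigcup_\alpha K_0(\alpha)$, the union taken over $\alpha\in K$ with $[K_0(\alpha):K_0]\leq d$, i.e. in a union of $K_0$-subspaces of $K$ each of $K_0$-dimension at most $d$. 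This is exactly the configuration governed by Lemma~\ref{LemmaLinAlg} over the infinite field $K_0$. Arguing by contradiction, I would assume $|\pi(C(K))\setmin K_0|<|K_0|$ and then use the branch together with ampleness to exhibit a family of $K$-rational points of $C$, of cardinality $|K_0|$ and with enough $K_0$-linear freedom in its coordinates, that cannot be covered by fewer than $|K_0|$ such bounded-dimensional proper subspaces; Lemma~\ref{LemmaLinAlg} would then force the index set to have size $\geq|K_0|$, contradicting the assumed bound.

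The hard part is precisely the interface between ampleness and coordinate control. Ampleness manufactures $|K|$ many points on a smooth curve with a rational point, but it stubbornly refuses to prescribe a coordinate, so one cannot simply ask for points with a given $x$-value outside $K_0$. The refinement of Koenigsmann's trick should consist in feeding ampleness an auxiliary curve built from the branch $\hat y$ and from a fixed element $c\in K\setmin K_0$, arranged so that the resulting points vary along a $K_0$-line in one coordinate, while the bounded degree coming from $\deg_Y f$ guarantees that such a $K_0$-line can meet the exceptional locus $\{x\in K_0\}$ in only boundedly many points — the same mechanism by which, on a single line $c+K_0$, the condition forcing membership in $K_0$ is a bounded-degree equation in the parameter and hence, once $c\notin K_0$, has only boundedly many solutions. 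Making this count precise, and verifying that the points produced are genuinely distinct and genuinely escape $K_0$, is the technical heart of the argument and the step I expect to be most delicate.
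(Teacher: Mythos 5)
Your setup (the Hensel branch through $(0,y_0)$), your disposal of the case $|K|>|K_0|$ via Pop's ample-implies-very-large theorem, and your guess that Lemma~\ref{LemmaLinAlg} over the infinite field $K_0$ is the finishing tool are all fine. But the step you defer as ``the technical heart'' is in fact the entire content of the lemma, and your sketch of it would not work. The missing idea (Koenigsmann's trick, which this lemma refines) is the following: since $K$ is ample, it is existentially closed in $K((t))$; applying Hensel's lemma in $K[[t]]$ simultaneously to $x=t$ and $x=ct$ for an arbitrary $c\in K$ (both specializations $f(0,Y)$ have the simple zero $y_0$) and transferring the resulting existential statement back to $K$ shows that for \emph{every} $c\in K$ there are $P_1,P_2\in C(K)$ with $\pi P_2\neq 0$ and $c=\pi P_1/\pi P_2$. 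In other words, every element of $K$ is a ratio of elements of $\pi(C(K))$. This identity is what converts the situation into a covering of the whole $K_0$-vector space $K$: writing $B$ for the set of points of $C(K)$ whose $x$-coordinate lies outside $K_0$, one gets
$$
 K\ \subseteq\ K_0\ \cup\ \bigcup_{Q\in B} K_0\cdot\frac{1}{\pi Q}\ \cup\ \bigcup_{P\in B} K_0\cdot \pi P\ \cup\ \bigcup_{P,Q\in B} K_0\cdot\frac{\pi P}{\pi Q},
$$
a covering by at most $1+2|B|+|B|^2$ one-dimensional (hence proper, as $[K:K_0]\geq 2$) $K_0$-subspaces, whence $1+2|B|+|B|^2\geq|K_0|$ and so $|B|\geq|K_0|$ by Lemma~\ref{LemmaLinAlg}; finiteness of the fibers of $\pi|_C$ then gives the claim. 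No contradiction argument is needed.

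Your substitute mechanisms cannot replace this identity. Lemma~\ref{LemmaLinAlg} requires a union of proper subspaces that exhausts a \emph{vector space}; your observation that the $y$-coordinates of exceptional points are algebraic of degree at most $d$ over $K_0$ only places a set of points inside a union of possibly $|K_0|$-many bounded-dimensional subspaces, which yields no conclusion, and a family of points with ``enough $K_0$-linear freedom'' is never defined and is not something ampleness can deliver. Likewise the coset idea fails for exactly the reason you flag yourself: ampleness produces points on the shifted curve $f(X+c,Y)=0$ with $x$-coordinate ranging over $K$, and gives no means to force those $x$-coordinates into $K_0$, i.e.\ to force points of $C$ onto the coset $c+K_0$. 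The multiplicative relation $\pi P_1=c\,\pi P_2$, extracted from existential closedness in $K((t))$, is precisely what circumvents this obstruction, and it is absent from your proposal.
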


\begin{proof}
Let $c\in K$.
Define $x_1^\prime=ct,x_2^\prime=t\in K((t))$ and $g_i(Y)=f(x_i^\prime,Y)\in K[[t]][Y]$, $i=1,2$.
Then $g_i$ modulo $t$ has the simple zero $y_0$.
By Hensel's lemma, there are $y_1^\prime,y_2^\prime\in K((t))$ such that
$f(x_1^\prime,y_1^\prime)=f(x_2^\prime,y_2^\prime)=0$.
Since $K$ is ample, it is existentially closed in $K((t))$, cf.~\cite[Proposition 1.1]{Pop}.
Therefore we can find $x_1,x_2,y_1,y_2\in K$ such that
$f(x_1,y_1)=f(x_2,y_2)=0$,
$x_2\neq 0$,
and $c=x_1/x_2$.
Since $c$ was arbitrary, we have shown:
$$
 K = \left\{ \frac{x_1}{x_2} : (x_1,y_1),(x_2,y_2)\in C(K), x_2\neq 0 \right\}.
$$

For simplicity, write
$A=\{ P \in C(K) : \pi P\in K_0\}$,
$B=\{ P \in C(K) : \pi P\notin K_0\}$,
$A^\prime = A\setmin\pi^{-1}(0)$.
Then $C(K)=A\cup B$ and $C(K)\setmin\pi^{-1}(0)=A^\prime\cup B$,
so

\begin{eqnarray*}
 K &=& \bigcup_{P\in C(K)\atop Q\in C(K)\setmin\pi^{-1}(0)}\{\frac{\pi P}{\pi Q}\}\\
   &=& \bigcup_{P\in A\atop Q\in A^\prime}\{\frac{\pi P}{\pi Q}\}\cup
     \bigcup_{P\in A \atop Q\in B}\{\frac{\pi P}{\pi Q}\}\cup
     \bigcup_{P\in B \atop Q\in A^\prime}\{\frac{\pi P}{\pi Q}\}\cup
     \bigcup_{P\in B \atop Q\in B}\{\frac{\pi P}{\pi Q}\}\\
   &\subseteq& K_0\cup\bigcup_{Q\in B} K_0\cdot \frac{1}{\pi Q} \cup
                           \bigcup_{P\in B} K_0\cdot \pi P \cup
                  \bigcup_{P\in B \atop Q\in B}K_0\cdot\frac{\pi P}{\pi Q}
\end{eqnarray*}

Thus, $K$ is covered by at most $1+2\cdot|B|+|B|^2$  $K_0$-subspaces of dimension $1$.
Since $[K:K_0]\geq 2$, these are proper subspaces and Lemma~\ref{LemmaLinAlg} implies that
$1+2\cdot|B|+|B|^2\geq|K_0|$, so
$|B|\geq |K_0|$.
Since the fibers of $\pi|_C$ are finite, this implies
$|\pi (C(K))\setmin K_0|=|B|\geq |K_0|$.
\end{proof}

\begin{proof}[Proof of Theorem \ref{main}]
The characterization of separating elements in separable function fields of one variable shows
that we can replace $\varphi$ by one of its coordinate functions and assume that
$C^\prime=\mathbb{A}^1$, i.e. $\varphi$ is a separable rational function on $C$.
Then it suffices to prove that $|\varphi(C(K))\setmin K_0|=|K|$.

Since $K$ is ample and $C$ has a simple $K$-rational point, $C(K)$ is infinite.
If $K_0$ is finite and $K$ algebraic over $K_0$, then,
since $\varphi$ has finite fibers,
$|\varphi(C(K))\setmin K_0|=\aleph_0=|K|$.
So assume without loss of generality that $|K_0|=\aleph_0=|K|$ or $K$ is transcendental over $K_0$.
Since a purely transcendental extension of a field is never ample,
we may adjoin a transcendental basis of $K|K_0$ to $K_0$ and assume that $|K_0|=|K|$.

Since $\varphi$ is separable,
there is a plane affine curve $D\subseteq\mathbb{A}^2$ defined by an absolutely irreducible polynomial $f\in K[X,Y]$,
separable in $Y$,
and a $K$-birational map $\eta:C\rightarrow D$ such that, with $\pi$ the projection on the first coordinate,
$\pi\circ\eta=\varphi$.
Since $\eta$ is birational, there are cofinite subsets $C_0$ of $C$ and $D_0$ of $D$ such that
$\varphi$ is defined on $C_0$ and
$\eta$ maps $C_0$ bijectively onto $D_0$ with inverse $\eta^{-1}$.
In particular, $\eta(C_0(K))=D_0(K)$ and $\varphi(C_0(K))=\pi(D_0(K))$.

Since $C(K)$ is infinite, also $D(K)$ is infinite.
Choose $P_1=(x_1,y_1)\in D(K)$ with $\frac{\partial f}{\partial Y}(x_1,y_1)\neq 0$ and
denote the curve defined by $f_1(X,Y)=f(X+x_1,Y)\in K[X,Y]$ by $D_1$.
By Lemma \ref{KoenigsmannLemma} applied to $f_1$, $y_1$, $K_0$  we get that
$|\pi(D_1(K))\setmin K_0|\geq|K_0|=|K|$.
Thus in particular $|\pi (D(K))|=|\pi (D_1(K))|=|K|$.

Assume that $|\pi (D(K))\setmin K_0|<|K|$.
Then there are infinitely many $P=(x,y)\in D(K)$ with $x=\pi P\in K_0$.
Choose $P_2=(x_2,y_2)\in D(K)$ with
$x_2=\pi P_2\in K_0$ and $\frac{\partial f}{\partial Y}(x_2,y_2)\neq 0$ and
denote the curve defined by $f_2(X,Y)=f(X+x_2,Y)\in K[X,Y]$ by $D_2$.
By Lemma \ref{KoenigsmannLemma} applied to $f_2$, $y_2$, $K_0$ we get that
$|\pi(D_2(K))\setmin K_0|\geq|K_0|=|K|$.
But $x_2\in K_0$ implies $|\pi(D_2(K))\setmin K_0|=|\pi(D(K))\setmin K_0|<|K|$, a contradiction.

Thus $|\pi(D(K))\setmin K_0|=|K|$.
But then also $|\varphi(C(K))\setmin K_0|=|K|$,
as was to be shown.
\end{proof}

We now give two immediate consequences of Theorem \ref{main}.

\begin{Corollary}\label{MainWeak}
Let $K$ be an ample field and $V$ a positive dimensional variety defined over a subfield $K^\prime$ of $K$
with a simple $K^\prime$-rational point.
Then for every proper subfield $K_0\supseteq K^\prime$ of $K$,
$|V(K)\setmin V(K_0)|=|K|$.
In particular, $K=K^\prime(V(K))$ and $|V(K)|=|K|$.
\end{Corollary}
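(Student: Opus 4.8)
The plan is to derive the corollary from Theorem~\ref{main} by replacing $V$ with a suitable curve through the given point. First I would reduce to the case that $V\subseteq\mathbb{A}^n$ is affine: passing to an affine open neighbourhood of the simple point $P_0$ defined over $K^\prime$ only decreases the relevant cardinalities, and since $V(K)\subseteq\mathbb{A}^n(K)=K^n$ has cardinality at most $|K|$, it suffices to prove the lower bound $|V(K)\setmin V(K_0)|\geq|K|$. I would also record that, for $K_0\supseteq K^\prime$, one has $V(K_0)=V(K)\cap\mathbb{A}^n(K_0)$, so that any $K$-point of $V$ having some coordinate outside $K_0$ already lies in $V(K)\setmin V(K_0)$.

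Next I would cut $V$ down to a curve. As $P_0$ is simple, $V$ is smooth of dimension $d=\ddim V\geq 1$ at $P_0$, and I would intersect $V$ successively with hyperplanes through $P_0$ that are transversal to $V$ at $P_0$ and keep the section geometrically irreducible, obtaining a geometrically irreducible curve $C\subseteq V$ over $K$ with $P_0\in C(K)$ a simple point and $C(K)\subseteq V(K)$. This is the step I expect to be the main obstacle: it requires a Bertini-type statement producing, over the possibly imperfect field $K$, a section that is simultaneously geometrically irreducible and smooth at the \emph{prescribed} point $P_0$. The saving grace is that an ample field is infinite, so the two conditions---geometric irreducibility of the section (Bertini over infinite fields) and non-containment of the tangent space $T_{P_0}V$ in the hyperplane---cut out non-empty Zariski-open sets of hyperplanes which therefore have $K$-rational members; iterating this as long as the dimension is at least $2$ yields $C$.

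With $C$ in hand I would choose a separable coordinate function and apply Theorem~\ref{main}. Since $C$ is geometrically reduced, $K(C)/K$ is separable; writing $K(C)=K(x_1,\dots,x_n)$ for the coordinate functions, in characteristic $0$ any non-constant $x_i$ is a separating element, while in characteristic $p$ one has $[K(C):K\cdot K(C)^p]=p$, so not every $x_i$ can lie in $K\cdot K(C)^p$ and some coordinate is separating---this is the characterization of separating elements already invoked in the proof of Theorem~\ref{main}. For such an $x_i$ the projection $\pi_i\colon C\to\mathbb{A}^1$ is a separable dominant $K$-rational map, so Theorem~\ref{main} gives $|\pi_i(C(K))\setmin K_0|=|K|$. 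As $\pi_i$ has finite fibres, the set of $P\in C(K)$ with $\pi_i(P)\notin K_0$ still has cardinality $|K|$, and by the reduction of the first paragraph each such $P$ lies in $V(K)\setmin V(K_0)$. Hence $|V(K)\setmin V(K_0)|\geq|K|$, and equality follows.

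Finally I would read off the two additional assertions. For $|V(K)|=|K|$ it is enough to apply Theorem~\ref{main} to $C$ with $K_0$ the prime field of $K$---proper because an ample field is infinite and distinct from its prime field---whence $|V(K)|\geq|C(K)|\geq|\pi_i(C(K))|=|K|$. For $K=K^\prime(V(K))$, let $K_1$ be the subfield of $K$ generated over $K^\prime$ by the coordinates of all points of $V(K)$; then $V(K)=V(K)\cap\mathbb{A}^n(K_1)=V(K_1)$, so $V(K)\setmin V(K_1)=\emptyset$, and the inequality already proved forbids $K_1$ from being a proper subfield, forcing $K_1=K$.
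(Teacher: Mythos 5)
Your proposal is correct and takes essentially the same approach as the paper: the paper likewise cuts $V$ down to an affine curve through the given simple point---citing \cite{KleimanAltman} for exactly the Bertini-with-prescribed-base-point statement you flag as the main obstacle (there the curve is even taken over $K^\prime$)---then applies Theorem~\ref{main}, and derives $K=K^\prime(V(K))$ by the same contradiction argument with $K_1=K^\prime(V(K))$. The only inessential difference is your detour through a separating coordinate: the paper applies Theorem~\ref{main} with $\varphi$ the inclusion $C\subseteq\mathbb{A}^n$ itself, which is trivially separable and dominant onto its image, so one gets $|C(K)\setmin C(K_0)|=|K|$ directly without invoking the separating-element characterization.
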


\begin{proof}
Choose an affine curve $C$ on $V$ defined over $K^\prime$
which has a simple $K^\prime$-rational point (see \cite{KleimanAltman}).
By Theorem \ref{main}, $|C(K)\setmin C(K_0)|=|K|$, so $|V(K)\setmin V(K_0)|=|K|$.
If $K_1=K^\prime(V(K))$ is a proper subfield of $K$, then
$|V(K)\setmin V(K_1)|=|K|$, contradicting $V(K)=V(K_1)$.
\end{proof}

The very last statement of this corollary
is the result of Pop mentioned in the introduction.
A weak consequence of Corollary \ref{MainWeak} is used in \cite{FehmPetersen}
to study the rank of Abelian varieties over ample fields.

\begin{Corollary}\label{Embedding}
If $E$ is ample and $K\subseteq E$ is a subfield with $\trdeg(E|K)\geq1$, then for every function field of one variable $F|K$
with a rational place,
there is a $K$-embedding of $F$ into $E$.
\end{Corollary}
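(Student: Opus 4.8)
The plan is to realize $F$ as the coordinate field of an affine curve and then feed that curve into Corollary~\ref{MainWeak} over the ample field $E$. A function field of one variable $F|K$ with a rational place is, after choosing a suitable affine model, the field $K(C)$ of an affine curve $C\subseteq\mathbb{A}^n$ defined over $K$ at which the place appears as a $K$-rational point $P_0$. Being $K$-rational and regular, $P_0$ is a simple point of $C$; in particular $C$ is generically smooth, hence geometrically reduced, and since $K$ is algebraically closed in $F$ it is also geometrically irreducible. Thus $C$ is a curve over $K$ with a simple $K$-rational point in the sense of Corollary~\ref{MainWeak}. Moreover, a $K$-embedding $F\hookrightarrow E$ is nothing but a point $P=(a_1,\dots,a_n)\in C(E)$ at which the evaluation homomorphism $K[C]\to E$ is injective, since such a homomorphism extends uniquely to $F=\mathrm{Frac}(K[C])$.

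The decisive step is the choice of the auxiliary subfield. I would take $K_0$ to be the relative algebraic closure of $K$ in $E$, that is, $K_0=\{a\in E: a\text{ is algebraic over }K\}$. As $\trdeg(E|K)\geq 1$, some element of $E$ is transcendental over $K$, so $K_0$ is a \emph{proper} subfield of $E$, while clearly $K_0\supseteq K$. Applying Corollary~\ref{MainWeak} with ample field $E$, base field $K'=K$, and proper subfield $K_0\supseteq K'$ yields $|C(E)\setmin C(K_0)|=|E|>0$, so there exists a point $P\in C(E)$ not lying in $C(K_0)$.

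It remains to verify that any such $P$ gives an embedding, and this is where the choice of $K_0$ pays off. The kernel of the evaluation map $K[C]\to E$ at $P$ is a prime ideal, and since $C$ is a curve the only primes of $K[C]$ are $(0)$ and the maximal ideals. Were the kernel maximal, the image $K(a_1,\dots,a_n)$ would be a field finite over $K$, forcing every coordinate $a_i$ into $K_0$ and hence $P\in C(K_0)$, contrary to the choice of $P$. Therefore the kernel is $(0)$, the evaluation map is injective, and it extends to the desired $K$-embedding $F\hookrightarrow E$.

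I expect the only real obstacle to be pinning down $K_0$ rather than any computation. It must contain $K'=K$ for Corollary~\ref{MainWeak} to apply, it must be a proper subfield of $E$ for the corollary to produce an actual point, and it must be large enough that avoiding $C(K_0)$ already forces a transcendental coordinate and hence genericity. Taking $K_0$ to be exactly the algebraic closure of $K$ in $E$ meets all three requirements at once, and the hypothesis $\trdeg(E|K)\geq 1$ is precisely what guarantees that $K_0\neq E$. The remaining points—that the rational place supplies a simple $K$-rational point and forces geometric reducedness, and that a point off $C(K_0)$ is generic—are then routine.
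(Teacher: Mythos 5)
Your proof is correct and follows essentially the same route as the paper: the same choice $K_0=\tilde{K}\cap E$, made proper by $\trdeg(E|K)\geq 1$, a model of $F|K$ with a simple $K$-rational point coming from the rational place, and the observation that any point of $C(E)\setmin C(K_0)$ is generic over $K$ and hence induces a $K$-embedding $F\cong_K K(C)\hookrightarrow E$. The only cosmetic difference is that you invoke Corollary~\ref{MainWeak} rather than Theorem~\ref{main} directly, and you spell out the genericity step (prime kernel, curve has only $(0)$ and maximal primes) which the paper leaves implicit.
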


\begin{proof}
Let $K_0=\tilde{K}\cap E$ and
let $C$ be a model of $F|K$ with a simple $K$-rational point.
By Theorem \ref{main}, $|C(E)\setmin C(K_0)|=|K|$.
Every point $\mathbf{x}\in C(E)\setmin C(K_0)$ is a generic point of $C$ over $K$,
so $F\cong_KK(C)\cong_KK(\mathbf{x})\subseteq E$.
\end{proof}

From Corollary \ref{Embedding} and the Riemann-Hurwitz theorem it follows
immediately that function fields are non-ample
and one can deduce many examples
of non-ample infinite algebraic extensions of function fields.
With a higher dimensional generalization of Lemma~\ref{KoenigsmannLemma},
one can prove a higher dimensional analogue of Corollary~\ref{Embedding}.
We will deal with these aspects in the forthcoming note \cite{subfields2}.

\section{Definability in ample fields}\label{sec2}

For many of the 'classical' ample fields $K$, a lot is known about definable infinite subsets $X\subseteq K$.
For example, if $K$ is  algebraically closed, then $X$ is cofinite.
If $K$ is real closed then $X$ is a finite union of intervals.
If $K$ is $p$-adically closed, then $X$ contains a non-empty open subset\footnote{see \cite{Macintyre}}.
In particular, in each of these cases $K$ has no definable proper infinite subfields.
The same is true for perfect PAC fields\footnote{as follows from \cite[Proposition 5.3]{DriesMacintyre}},
and
a recent result in \cite{JunkerKoenigsmann} shows that
it is also true in the case that $K$ is Henselian of characteristic zero.
We now show that for the huge class of perfect ample fields,
there are at least no {\em existentially} definable proper infinite subfields.

Note that it is {\em not true} that an ample field of characteristic zero
has no definable proper subfields (see Example \ref{power} below),
so our result on {\em existentially} definable subfields is in some sense the best we can expect
for the class of ample fields.
Furthermore note that the corresponding statement  for {\em non-ample} fields is {\em not true} in general:
If $F|K$ is a function field
and $K$ is algebraically closed, say,
then $K$ {\em is} existentially definable in $F$ (see \cite{Duret}, \cite{Koenigsmann2}).

\begin{Lemma}\label{Lemmadefinable}
Let $K$ be a perfect ample field and $K_0\subseteq K$ a subfield.
Let $X\subseteq K$ be existentially $K_0$-definable
such that $X\not\subseteq \tilde{K_0}$.
Then $|X\setmin K_1|=|K|$ for every proper subfield $K_1$ of $K$.
\end{Lemma}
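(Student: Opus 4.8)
The plan is to unwind the definition of $X$ into geometric data, cut a suitable curve out of that data, and apply Theorem~\ref{main} to it. Since $X$ is existentially $K_0$-definable, I would write a defining existential formula in disjunctive normal form and use the Rabinowitsch trick (replacing each inequation $q\neq 0$ by an equation $qz=1$ in a fresh variable) to express $X$ as a finite union $X=\bigcup_{j}\pi(V_j(K))$, where each $V_j\subseteq\mathbb{A}^{1+m_j}$ is an affine algebraic set defined over $K_0$ and $\pi$ is the projection onto the first coordinate. Because $K$ is perfect it contains the perfect closure $K_0^{1/p^\infty}\cap K$ of $K_0$, and enlarging the parameter field to this closure changes neither the hypothesis $X\not\subseteq\tilde{K_0}$ (as $\tilde{K_0}$ is unchanged) nor the conclusion; so I may assume $K_0$ is perfect. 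From $X\not\subseteq\tilde{K_0}$ I then fix an index $j$ and a point $P\in V_j(K)$ whose image $a=\pi(P)$ is transcendental over $K_0$.

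The heart of the argument, and the main obstacle, is to manufacture inside $V_j$ a \emph{curve over $K$} to which Theorem~\ref{main} applies, despite the facts that $K_0$ need only be the perfect closure of the original parameter field and that $\pi$ may be inseparable in positive characteristic. Let $W$ be the Zariski closure of $P$ over $K_0$; it is an integral $K_0$-variety with generic point $P$, and since $\trdeg(K_0(P)|K_0)\geq 1$ we have $\ddim W\geq 1$ and $\pi|_W$ dominant onto $\mathbb{A}^1$. As $K_0$ is perfect, $W$ is geometrically reduced, so its smooth locus is dense and contains the generic point $P$; hence $P$ is a smooth $K$-point of the base change $W_K\subseteq V_{j,K}$, lying on a unique irreducible component $W^\prime$, and a $K$-variety possessing a smooth $K$-rational point is geometrically integral, so $W^\prime$ is a variety in the sense of this paper with $\pi|_{W^\prime}$ still dominant. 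I would then cut $W^\prime$ with $\ddim W^\prime-1$ general hyperplanes through $P$ to produce a curve $C\subseteq W^\prime\subseteq V_{j,K}$ that is smooth at $P$ (hence geometrically integral with a simple $K$-rational point) and on which $\pi$ is non-constant; the dimension count keeping $C$ out of a single fibre of $\pi$, together with the transversality keeping $P$ smooth, is the technical core here.

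It remains to feed $C$ and $\pi$ into Theorem~\ref{main}, and the one remaining subtlety is separability. Writing $t=\pi|_C\in K(C)$ and letting $r\geq 0$ be maximal with $t\in K(C)^{p^r}$, I would set $t=s^{p^r}$ with $s$ a separating element, so that $s:C\to\mathbb{A}^1$ is a separable dominant $K$-rational map; if $\ch K=0$ this is vacuous with $r=0$ and $s=t$. Applying Theorem~\ref{main} to $C$, $s$ and the proper subfield $K_1^\ast=\{u\in K:u^{p^r}\in K_1\}$ — which is proper precisely because Frobenius is a bijection of the perfect field $K$ — yields $|s(C(K))\setmin K_1^\ast|=|K|$. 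Since $\pi(Q)=s(Q)^{p^r}$ wherever $s$ is defined and $u\mapsto u^{p^r}$ is injective, this map sends $s(C(K))\setmin K_1^\ast$ injectively into $\pi(C(K))\setmin K_1$, whence $|\pi(C(K))\setmin K_1|=|K|$.

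Finally, as $\pi$ is everywhere defined on affine space and $C\subseteq V_{j,K}$, one has $\pi(C(K))\subseteq\pi(V_j(K))\subseteq X$, so $|K|=|\pi(C(K))\setmin K_1|\leq|X\setmin K_1|\leq|X|\leq|K|$, giving $|X\setmin K_1|=|K|$. I expect the genuinely delicate points to be isolated in the second paragraph (reduction to a perfect parameter field and the linear-section construction of a geometrically integral curve through a smooth rational point), while the inseparability in the third paragraph is handled cleanly by the bijectivity of Frobenius on the perfect field $K$.
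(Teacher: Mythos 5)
Your proof is correct, but it parts ways with the paper's at both of the technically delicate points, and the comparison is instructive. First, the curve construction: you keep the parameter field small (passing only to its perfect closure) and then work geometrically over $K$ --- base-changing the $K_0$-locus $W$ of $P$, isolating the geometrically integral component $W'$ through the smooth point, and cutting down to a curve by Bertini-type hyperplane sections through $P$. The paper instead enlarges $K_0$: it adjoins a transcendence base of $K_0(\mathbf{x})|K_0(x_0)$ to $K_0$, so that the locus of the witness point over $K_0$ is \emph{already} a curve, and then replaces $K_0$ by $\tilde{K_0}\cap K$, which makes $K_0(\mathbf{x})|K_0$ a regular extension and hence the locus geometrically integral. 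That one move eliminates your entire second paragraph --- exactly the part you identify as the technical core, and the only step of your argument resting on an unproved (though citable, via Kleiman--Altman) Bertini statement; the paper reserves that citation for Corollary \ref{MainWeak}, where it is genuinely needed. Second, inseparability: the paper repairs it at the level of the formula, replacing the $f_i$ by Frobenius twists $f_i^\prime$ so that $F|K_0(x_0)$ becomes separable, perfectness of $K$ guaranteeing that the twisted formula still defines $X$; then Theorem \ref{main} is applied to $\pi$ itself with the original $K_1$. You instead leave the defining data alone, factor $\pi|_C=s^{p^r}$ with $s$ separating (valid because the constant field $K$ is perfect, so any element outside $K(C)^p$ is separating), apply Theorem \ref{main} to $s$ and the twisted subfield $\{u\in K:u^{p^r}\in K_1\}$ (proper since Frobenius is an automorphism of $K$), and transport the conclusion along the injective map $u\mapsto u^{p^r}$. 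Both mechanisms use perfectness of $K$ in an essential way and both are sound; yours is arguably the more local and elegant fix for inseparability, while the paper's reductions keep everything defined over $K_0$, avoid Bertini altogether, and make the final application of Theorem \ref{main} completely direct.
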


\begin{proof}
Let $\varphi(x_0)$ be an existential formula that defines $X$.
Without loss of generality assume that $\varphi(x_0)$ is of the form
$$
(\exists x_1,\dots,x_n)(\bigwedge_{i=1}^k f_i(x_0,x_1,\dots,x_n)=0),
$$
where $f_1,\dots,f_k\in K_0[X_0,\dots,X_n]$.

Since $X\not\subseteq\tilde{K_0}$,
there are $x_0\in X$ and $x_1,\dots,x_n\in K$
such that with $\mathbf{x}=(x_0,\dots,x_n)$, $\bigwedge_{i=1}^k f_i(\mathbf{x})=0$
and $\trdeg(K_0(x_0)|K_0)=1$.
Let $F=K_0(\mathbf{x})$ and
adjoin a transcendence base of $F|K_0(x_0)$ to $K_0$ to assume without loss of generality
that $\trdeg(F|K_0)=1$.
Then replace $K_0$ by $\tilde{K_0}\cap K$,
so that $K_0$ is perfect and $F|K_0$ is a separable function field of one variable.

If $\ch(K)=p>0$,
let $F^\prime$ be the separable closure of $K_0(x_0)$ in $F$.
Then $F^\prime=F^{p^r}K_0=K_0(x_0,x_1^{p^r},\dots,x_n^{p^r})$ for some $r\geq 0$.
Define $f_i^\prime(X_0,\dots,X_n)=f_i^{p^r}(X_0,X_1^{p^{-r}},\dots,X_n^{p^{-r}})\in K_0[X_0,\dots,X_n]$,
$\varphi^\prime(x_0)$ as $(\exists x_1,\dots,x_n)(\bigwedge_{i=1}^k f_i^\prime(x_0,x_1,\dots,x_n)=0)$,
and $\mathbf{x}^\prime=(x_0,x_1^{p^r},\dots,x_n^{p^r})$.
Then $\bigwedge_{i=1}^k f_i^\prime(\mathbf{x}^\prime)=0$ and $K_0(\mathbf{x}^\prime)=F^\prime$.
Since $K$ is perfect, the subset defined by $\varphi^\prime$ is exactly $X$.
Thus replace $\varphi$ by $\varphi^\prime$, $\mathbf{x}$ by $\mathbf{x}^\prime$ and $F$ by $F^\prime$
to assume without loss of generality that $F|K_0(x_0)$ is separable.

Let $C$ be the locus of $\mathbf{x}$ over $K_0$. That is,
$C\subseteq\mathbb{A}^{n+1}$ is an affine curve defined over $K_0$
with $F\cong_{K_0} K_0(C)$,
and $\mathbf{x}$ is a simple $F$-rational point of $C$.
If $\mathbf{y}\in C(K)$ is another point of $C$,
then there is a $K_0$-specialization $\eta_{\mathbf{y}}:K_0[\mathbf{x}]\rightarrow_{K_0} K_0[\mathbf{y}]$.
In particular, $0=\eta_{\mathbf{y}}(f_i(\mathbf{x}))=f_i(\mathbf{y})$ for all $i$,
i.e. $y_0\in X$.
This means that if $\pi:\mathbb{A}^{n+1}\rightarrow\mathbb{A}^1$ is the projection
on the first coordinate, then $\pi(C(K))\subseteq X$.

But $\pi|_C$ is separable since $F|K_0(x_0)$ is separable,
so $|\pi(C(K))\setmin K_1|=|K|$ by Theorem \ref{main}.
Thus $|X\setmin K_1|\geq |\pi(C(K))\setmin K_1|=|K|$.
\end{proof}

\begin{Corollary}\label{definable}
Let $K$ be a perfect ample field and let $X\subseteq K$
be existentially $K$-definable and infinite.
Then $|X\setmin K_1|=|K|$
for every proper subfield $K_1$ of $K$.
\end{Corollary}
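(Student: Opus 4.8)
The plan is to reduce the Corollary to Lemma~\ref{Lemmadefinable} by descending to the field of parameters, the only genuine work being the degenerate case in which every element of $X$ is algebraic over those parameters.

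First I fix an existential formula defining $X$ and let $K_0$ be the subfield of $K$ generated over the prime field by the finitely many parameters that occur in it. Then $X$ is existentially $K_0$-definable, and since $K_0$ is finitely generated over the prime field, both $K_0$ and $\tilde{K_0}$ are countable. If $X\not\subseteq\tilde{K_0}$, then Lemma~\ref{Lemmadefinable} applies directly and yields $|X\setmin K_1|=|K|$ for every proper subfield $K_1$. In particular this disposes of every uncountable $X$, because such an $X$ cannot be contained in the countable set $\tilde{K_0}$.

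The remaining, and main, case is $X\subseteq\tilde{K_0}$, where $X$ is countably infinite and Lemma~\ref{Lemmadefinable} is unavailable: no element of $X$ is transcendental over $K_0$, so the one-point locus used there is zero-dimensional rather than a curve. Here I would argue globally. Writing the formula as $(\exists x_1,\dots,x_n)\bigwedge_{i=1}^k f_i(x_0,\dots,x_n)=0$ with $f_i\in K_0[X_0,\dots,X_n]$, and letting $W\subseteq\mathbb{A}^{n+1}$ be the closed set they define and $\pi$ the projection onto $x_0$, one has $X=\pi(W(K))$. Since $X$ is infinite, the Zariski closure of $W(K)$ is positive-dimensional; choosing a component $V\subseteq W$ on which $\pi$ is dominant, $V$ is defined over the perfect field $K$ and its $K$-rational points are Zariski dense in it. Cutting $V$ by general $K$-hyperplanes through a $K$-rational point that is simple on $V$ should produce a curve $C\subseteq W$ carrying a simple $K$-rational point and with $\pi|_C$ dominant, and --- as in the Lemma, using that $K$ is perfect --- I would replace $\pi|_C$ by a separable map in characteristic $p$. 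Since $C\subseteq W$, every $K$-point of $C$ is a witness, so $\pi(C(K))\subseteq X$; applying Theorem~\ref{main} to $C$ then gives $|\pi(C(K))\setmin K_1|=|K|$ and hence $|X\setmin K_1|=|K|$ (incidentally forcing $|K|=\aleph_0$ in this case).

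I expect the extraction of $C$ to be the main obstacle: one must arrange simultaneously that $C$ meets the \emph{simple} locus of $V$ in a $K$-rational point, that $\pi|_C$ is \emph{dominant}, and in characteristic $p$ that it is \emph{separable}. The ample hypothesis supplies an abundance of $K$-points, ensuring density on the dominating component and hence a simple rational point; a Bertini-type argument supplies dominance after a generic hyperplane section; and perfectness supplies separability. Welding these together is precisely the global analogue of the transcendence trick that drives Lemma~\ref{Lemmadefinable}, and it is where the real effort lies.
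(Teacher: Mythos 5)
Your proposal is correct, but it handles the crucial degenerate case ($X$ countably infinite and contained in $\tilde{K_0}$) by a genuinely different route than the paper. The paper does no further geometry there: it passes to an $\aleph_1$-saturated ultrapower $K^*\supseteq K$, in which the set $X^*$ defined by the same formula becomes uncountable, hence cannot lie in the countable field $\tilde{K_0}$; Lemma~\ref{Lemmadefinable} then applies to $K^*$ (perfectness and ampleness being elementary), and transferring back --- a finite difference $X\setmin K_1$ would stay finite in the ultrapower, while $K_1^*$ stays proper --- gives $|X\setmin K_1|=\infty$, which finishes the countable case and, when $K$ is uncountable, shows $X\not\subseteq\tilde{K_0}$ so that Lemma~\ref{Lemmadefinable} applies to $K$ itself after all. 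You instead argue geometrically over $K$: Zariski closure of $W(K)$, a dominating component $V$ with dense $K$-points (which, over the perfect field $K$, also forces geometric irreducibility), a Kleiman--Altman/Bertini curve section through a smooth $K$-point not contained in a fibre of $\pi$, a Frobenius twist for separability, then Theorem~\ref{main}. This is sound, and in fact it uses only that $X$ is infinite, so it proves the Corollary uniformly, with no case split and no ultrapowers; the price is redoing globally the work the paper confines to Lemma~\ref{Lemmadefinable}. Two details need more care than your sketch suggests: Theorem~\ref{main} must be applied to the twisted curve $C'$, the image of $C$ under $(x_0,\mathbf{x})\mapsto(x_0,\mathbf{x}^{p^r})$, and not to $C$, since the twisting map itself is purely inseparable; and one must use perfectness of $K$ to pull $K$-points of $C'$ back to $C$, which gives both $\pi(C'(K))=\pi(C(K))\subseteq X$ and, from the infinitude of $C'(K)$, a simple $K$-rational point on $C'$. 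These gaps are fillable, so your plan works; the paper's ultrapower trick simply gets there much faster.
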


\begin{proof}
Let $\varphi$ be an existential formula that defines $X$,
and let $K_0\subseteq K$ be the field generated by the parameters used in $\varphi$.
Let $K^*\supseteq K$ be an $\aleph_1$-saturated ultrapower of $K$.
Then the subset $X^*\subseteq K^*$ defined by $\varphi$ in $K^*$ is uncountable.
Since $K_0$ is finitely generated,
$X^*\not\subseteq\tilde{K_0}$.
Thus by Lemma \ref{Lemmadefinable},
$|X^*\setmin K_1^*|=|K^*|$
for every proper subfield $K_1^*$ of $K^*$.

Let $K_1$ be any proper subfield of $K$.
Then the corresponding ultrapower $K_1^*$ is a proper subfield of $K^*$,
thus $|X^*\setmin K_1^*|=|K^*|$,
which implies
$|X\setmin K_1|=\infty$.
So if $K$ is countable, $|X\setmin K_1|=\aleph_0=|K|$ as was to be shown.
If $K$ is uncountable, then $\tilde{K_0}\cap K$ is a proper subfield of $K$,
so $|X\setmin (\tilde{K_0}\cap K)|=\infty$.
In particular $X\not\subseteq \tilde{K_0}$.
Thus by Lemma~\ref{Lemmadefinable}, $|X\setmin K_1|=|K|$.
\end{proof}

\begin{Corollary}\label{definable2}
A perfect ample field $K$
has no
existentially $K$-definable proper infinite subfields.\qed
\end{Corollary}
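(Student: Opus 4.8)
The plan is to deduce the statement immediately from Corollary \ref{definable}. Suppose toward a contradiction that $K_1$ is an existentially $K$-definable proper infinite subfield of $K$. The key observation is that $K_1$ plays a double role: it is at once an infinite existentially $K$-definable subset of $K$ and a proper subfield of $K$. Hence I may apply Corollary \ref{definable} with the definable set $X$ taken to be $K_1$ itself, and with the distinguished proper subfield of $K$ in that corollary also taken to be $K_1$. This yields $|K_1\setmin K_1|=|K|$. But $K_1\setmin K_1=\emptyset$, while every ample field is very large and in particular infinite, so $|K|\geq\aleph_0>0$. This contradiction shows that no such $K_1$ can exist.

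I expect no genuine obstacle in this argument: the entire content is the observation that the hypothesis of Corollary \ref{definable} ($X$ infinite and existentially $K$-definable) and its conclusion ($|X\setmin K_1|=|K|$ for \emph{every} proper subfield $K_1$) may be instantiated with $X$ and $K_1$ being the same set, which instantly forces the empty set to have cardinality $|K|$. Because of this, the proof is a one-line deduction, which is why the statement is marked with \qed rather than given a separate displayed proof.
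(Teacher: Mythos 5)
Your proof is correct and is precisely the intended argument: the paper marks this corollary with \qed because it follows from Corollary~\ref{definable} by instantiating both the definable set $X$ and the proper subfield $K_1$ with the hypothetical definable subfield, forcing $|K|=|\emptyset|=0$, a contradiction. Nothing further is needed.
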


The following example shows that Corollary~\ref{definable2} does not hold for arbitrary $K$-definable subfields
(rather than existentially $K$-definable subfields):

\begin{Example}\label{power}
The prime field $\mathbb{Q}$ is a $\emptyset$-definable proper subfield of the ample field $\mathbb{Q}((X,Y))$.
\end{Example}

\begin{proof}
As noted in \cite{PopHenselian},
$K((X,Y))$ is ample for any field $K$.
But the ring $K[[X,Y]]$ is definable in $K((X,Y))$, see \cite[Theorem 3.34]{JL},
and if ${\rm char}(K)=0$, then $\mathbb{N}$ is definable in $K[[X,Y]]$,
see \cite[Th\'eor\`eme~2.1]{Delon}.
\end{proof}

Note that the results of this section are closely related to \cite{JunkerKoenigsmann}.
There, a field $K$ is called {\em very slim},
if in all fields elementary equivalent to $K$,
'the model theoretic algebraic closure in $K$ coincides
with the relative field theoretic algebraic closure', cf.~\cite[Definition 1.1]{JunkerKoenigsmann}.

Since a very slim field has no definable proper infinite subfields, see \cite[Proposition 4.1]{JunkerKoenigsmann},
Example \ref{power}
gives a positive answer to (1) of \cite[Question~8]{JunkerKoenigsmann}:
Perfect ample fields that are not very slim do exist.
In particular, \cite[Theorem 5.5]{JunkerKoenigsmann},
which states that Henselian fields of characteristic zero are very slim,
does not generalise to the class of ample fields of characteristic zero.

Since a model complete field is perfect and all its definable subsets are existentially definable,
Lemma~\ref{Lemmadefinable}
immediately gives a new proof for
\cite[Theorem~5.4]{JunkerKoenigsmann}:
Every model complete ample field is very slim.

\section*{Acknowledgements}

The author would like to thank Lior Bary-Soroker and Moshe Jarden
for helpful comments and encouragement,
and Elad Paran for contributing to the introduction.
This work was supported by the European Commission under contract MRTN-CT-2006-035495.


\begin{thebibliography}{CvdDM92}

\bibitem[CvdDM92]{DriesMacintyre}
Zo\'e Chatzidakis, Lou van~den Dries, and Angus Macintyre.
\newblock Definable sets over finite fields.
\newblock {\em Journal f\"ur die reine und angewandte Mathematik},
  427:107--135, 1992.

\bibitem[DD99]{DD}
Pierre D\`ebes and Bruno Deschamps.
\newblock The regular inverse galois problem over large fields.
\newblock {\em Geometric Galois actions, London Math. Soc., Lec. Note Ser.},
  2(8):119--138, 1999.

\bibitem[Del81]{Delon}
Fran{\c c}oise Delon.
\newblock Ind\'ecidabilit\'e de la th\'eorie des anneaux de s\'eries formelles
  {\`a} plusiers ind\'etermin\'ees.
\newblock {\em Fundamenta Mathmaticae}, 112:215--229, 1981.

\bibitem[Dur86]{Duret}
Jean-Louis Duret.
\newblock Sur la th\'eorie \'el\'ementaire des corps de fonctions.
\newblock {\em Journal of Symbolic Logic}, 51(4):948--956, 1986.

\bibitem[Feh08]{subfields2}
Arno Fehm.
\newblock Subfields of ample fields {II}. {E}mbeddings into ample fields.
\newblock 2008.
\newblock Manuscript.

\bibitem[FP08]{FehmPetersen}
Arno Fehm and Sebastian Petersen.
\newblock On the rank of abelian varieties over ample fields.
\newblock Manuscript, 2008.

\bibitem[Har06]{Harbater}
David Harbater.
\newblock On function fields with free absolute galois groups.
\newblock 2006.
\newblock arXiv:math/0608584v1 [math.AG].

\bibitem[HS05]{HarbaterStevenson}
David Harbater and Katherine~F. Stevenson.
\newblock Local galois theory in dimension two.
\newblock {\em Advances in Mathematics}, 198:623--653, 2005.

\bibitem[JK08]{JunkerKoenigsmann}
Markus Junker and Jochen Koenigsmann.
\newblock Schlanke {K}\"orper ({S}lim {F}ields).
\newblock Manuscript, 2008.

\bibitem[JL89]{JL}
Christian~U. Jensen and Helmut Lenzing.
\newblock {\em Model Theoretic Algebra}.
\newblock Algebra, Logic and Applications 2. Gordon and Breach, 1989.

\bibitem[KA79]{KleimanAltman}
Steven~L. Kleiman and Allen~B. Altman.
\newblock Bertini theorems for hypersurfaces sections containing a subscheme.
\newblock {\em Communications in Algebra}, 7(8):775--790, 1979.

\bibitem[Kob06]{Kobayashi}
Emi Kobayashi.
\newblock A remark on the {M}ordell-{W}eil rank of elliptic curves over the
  maximal abelian extension of the rational number field.
\newblock {\em Tokyo Journal of Mathematics}, 29(2):295--300, 2006.

\bibitem[Koe02]{Koenigsmann2}
Jochen Koenigsmann.
\newblock Defining transcendentals in function fields.
\newblock {\em Journal of Symbolic Logic}, 67(3):947--956, 2002.

\bibitem[Kol99]{Kollar}
J\'anos Koll\'ar.
\newblock Rationally connected varieties over local fields.
\newblock {\em Annals of Mathematics}, 150:357--367, 1999.

\bibitem[LR08]{LozanoRobledo}
\'Alvaro Lozano-Robledo.
\newblock Ranks of abelian varieties over infinite extensions of the rationals.
\newblock to appear in Manuscripta Mathematica, 2008.

\bibitem[Mac76]{Macintyre}
Angus Macintyre.
\newblock On definable subsets of $p$-adic fields.
\newblock {\em Journal of Symbolic Logic}, 41(3):605--610, 1976.

\bibitem[MB01]{MoretBailly}
Laurent Moret-Bailly.
\newblock R-\'equivalence simultan\'ee de torseurs: un compl\'ement \`a
  l'article de {P}. {G}ille.
\newblock {\em Journal of Number Theory}, 91(2):293--296, 2001.

\bibitem[Pop96]{Pop}
Florian Pop.
\newblock Embedding problems over large fields.
\newblock {\em Annals of Mathematics}, 144:1--34, 1996.

\bibitem[Pop08]{PopHenselian}
Florian Pop.
\newblock Henselian implies large.
\newblock Manuscript, 2008.

\bibitem[PP07]{PoonenPopPublished}
Bjorn Poonen and Florian Pop.
\newblock First-order definitions in function fields over anti-mordellic
  fields.
\newblock In Chatzidakis, Macpherson, Pillay, and Wilkie, editors, {\em Model
  Theory with applications to algebra and analysis}. Cambridge University
  Press, 2007.

\end{thebibliography}

\end{document}